\documentclass[11pt]{article}

\usepackage[margin=1in]{geometry}
\usepackage{amsmath,amssymb,amsthm,mathtools}
\usepackage{enumitem}
\usepackage{microtype}
\usepackage[hidelinks]{hyperref}
\usepackage{xcolor}

\newtheorem{theorem}{Theorem}[section]
\newtheorem{lemma}[theorem]{Lemma}

\newtheorem{corollary}[theorem]{Corollary}
\theoremstyle{definition}
\newtheorem{definition}[theorem]{Definition}
\theoremstyle{remark}
\newtheorem{remark}[theorem]{Remark}

\newcommand{\blfootnote}[1]{%
	\begingroup
	\renewcommand\thefootnote{}\footnote{#1}%
	\addtocounter{footnote}{-1}%
	\endgroup
}

\newcommand{\Ap}{\operatorname{Ap}}
\newcommand{\Z}{\mathbb Z}
\newcommand{\Nzero}{\mathbb Z_{\ge 0}}
\newcommand{\brp}[3]{p_{\le #1}^{(\le #2)}(#3)}
\newcommand{\qbinom}[2]{\genfrac{[}{]}{0pt}{}{#1}{#2}_{q}}

\title{$p$-NUMERICAL SEMIGROUP OF THE SEQUENCE OF CONSECUTIVE ODD INTEGERS}
\usepackage{authblk}

\author[1,2]{Takao Komatsu}
\author[2]{Sungjin Hyun}
\author[2,*]{Kyunghwan Song} 

\affil[1]{Department of Mathematics, Institute of Science Tokyo, 152-8551 Japan}
\affil[2]{Department of Mathematics, Jeju National University, 102 Jejudaehakro Jeju, 63243, Republic of Korea}
\date{}

\begin{document}

\maketitle
\blfootnote{* Corresponding author: khsong@jejunu.ac.kr}

\begin{abstract}
We prove the $p$-Frobenius problems proposed as 
Conjectures 7.1 and 7.5 in \cite{KP2025} for two families of consecutive odd integers. 
For integers $r,L,n\ge0$, the bounded restricted partition
function $\brp{r}{L}{n}$ counts partitions of $n$ into at most $r$ parts, each at most $L$.
Thus the bounded restricted partition functions $\brp{3}{a}{s}$ and $\brp{3}{a+1}{s}$ play central roles in the proofs. Their generating functions are Gaussian polynomials, whose symmetry and unimodality provide a common tool for treating both families.
\end{abstract}

\section{Definitions and main results}

Let $A = \{a_1,\ldots,a_k\}$ be a finite set of positive integers with $\gcd(A) = 1$.  For $n\in\Nzero$, define
$$
r_A(n) = \#\left\{
(x_1,\ldots,x_k)\in\Nzero^k:
x_1a_1+\cdots+x_ka_k=n
\right\}.
$$
Thus $r_A(n)$ counts unordered representations, or equivalently
factorizations by multiplicity vectors.

For $p\in\Nzero$, put
$$
S_p(A)=\{n\in\Nzero:r_A(n)>p\}.
$$
The $p$-Frobenius number is
$$
g_p(A)=\max\bigl(\Nzero\setminus S_p(A)\bigr).
$$
If $b = \min A$, the $p$-Ap\'ery set with respect to $b$ is
$$
\Ap_p(A;b) = \{m_0^{(p)},m_1^{(p)},\ldots,m_{b-1}^{(p)}\},
$$
where $m_i^{(p)}$ is the least member of $S_p(A)$ congruent to
$i\pmod b$. 

The paper \cite{KP2025} shows explicit formulas of the $p$-Frobenius number and related values when the elements in $A$ consist of the sequence of consecutive integers $a,a + 1,\dots,2 a-1$. 
This result seems to be easy to generalize or find variations, but in fact, even in the case of the odd number, it is hard to get the results, and only conjectures are given in \cite{KP2025}. In this paper, we give the $p$-Frobenius numbers by verifying Conjecture 7.1 in the case of consecutive odd numbers $2a+1, 2 a+3,\dots,4 a+1$, and Conjecture 7.5 in the case of consecutive odd numbers $2a+1, 2 a+3,\dots,4 a+3$.

\subsection{Restricted and bounded restricted partitions}

\begin{definition}[Bounded restricted partition function]
\label{def:bounded-partition}
For integers $r,L,n\ge0$, define
$$
\brp{r}{L}{n}
=
\#\left\{
(\lambda_1,\ldots,\lambda_r)\in\Nzero^r:
L\ge\lambda_1\ge\cdots\ge\lambda_r\ge0,\quad
\lambda_1+\cdots+\lambda_r=n
\right\}.
$$
Equivalently, $\brp{r}{L}{n}$ is the number of partitions of $n$
into at most $r$ parts, with every part at most $L$.  We set
$\brp{r}{L}{n}=0$ for $n<0$.
\end{definition}

When $L\ge n$, the upper bound on the parts is automatic.  We therefore
write $p_{\le r}(n):=\brp{r}{n}{n}$ for the number of partitions of $n$ into at most $r$ parts, and set
$p_{\le r}(n)=0$ for $n<0$.  By conjugation, $p_{\le r}(n)$ also
counts partitions whose parts are at most $r$.  These are classical
restricted partition functions; see \cite[Chapter 1]{Andrews}.  For
$r=3,4,5,6$, the sequences are listed in \cite{Sloane2026} as A001399,
A001400, A001401, and A001402, respectively.

In particular, the two bounded quantities needed later are $\brp{3}{a}{s}$ and $\brp{3}{a+1}{s}.$
By Definition~\ref{def:bounded-partition}, these count the nonincreasing
triples with entries bounded by $a$ and $a+1$, respectively.  

For simplicity, set $M_m:=p_{\le4}(m)$ for $m\ge0.$ The sequence $(M_m)_{m\ge0}$ appears in \cite[A001400]{Sloane2026}.

\subsection{The two results}

The first family is
$$
A_a^-=\{2a+1,2a+3,\ldots,4a+1\}.
$$
It is the family appearing in Conjecture 7.5.

\begin{theorem}
\label{thm:minus}
Let $a\ge4$, and let
$$
1\le p\le\left\lfloor\frac{a+1}{2}\right\rfloor.
$$
Let $m\ge1$ be determined by
$$
M_m\le p<M_{m+1}.
$$
Then
$$
g_p(A_a^-)=8a+2m+4.
$$
\end{theorem}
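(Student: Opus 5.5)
We parametrize the generators as $a_j=2a+1+2j$ for $0\le j\le a$, so a representation of $n$ with $k$ summands is a multiset $\{j_1,\dots,j_k\}\subseteq\{0,\dots,a\}$ with
$$
n=k(2a+1)+2(j_1+\cdots+j_k).
$$
Hence $r_{A_a^-}(n)=\sum_{k\ge0}\brp{k}{a}{s_k}$, where $s_k=(n-k(2a+1))/2$ is required to be a nonnegative integer; this forces $k\equiv n\pmod 2$. The plan is to compute $r(n)$ for $n$ near $8a+2m+4$ by splitting according to $k$.

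For $n\le 10a+5$, only $k\le 4$ can contribute. The parity condition then leaves $k\in\{0,2,4\}$ for even $n$ and $k\in\{1,3\}$ for odd $n$. The term $k=0$ occurs only for $n=0$. The term $k=1$ gives $1$ or $0$, and the term $k=2$ is $\brp{2}{a}{s}$, which is at most about $a/2+1$. Since $p\le\lfloor (a+1)/2\rfloor$ and $m$ is small, these terms should never exceed $p$ on the relevant range.

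Take $n=8a+2m+4$. Then $k=4$ gives $s_4=m$, and $k=2$ gives $s_2=2a+m+1>2a$, so that term is empty. Thus
$$
r(n)=\brp{4}{a}{m}=M_m\le p,
$$
where the bound $m\le a$ (which should follow from $M_m\le p\le (a+1)/2$, because $M_m$ grows cubically) makes the upper bound on parts automatic. This shows $n\notin S_p$, so $g_p(A_a^-)\ge 8a+2m+4$.

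For the upper bound we show $r(n)>p$ for every $n>8a+2m+4$, in three ranges.

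\emph{Even $n=8a+4+2t$ with $t\ge m+1$.} For $t\le 2a$, the $k=4$ term gives $\brp{4}{a}{t}\ge \brp{4}{a}{m+1}=M_{m+1}>p$. This uses monotonicity of $\brp{4}{a}{\cdot}$ up to the middle $2a$, which is the unimodality of the Gaussian polynomial $\qbinom{a+4}{4}$. For $t>2a$, symmetry handles $k=4$ only down to a point, so there we also bring in the $k=6$ term.

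\emph{Odd $n=6a+3+2s$.} The $k=3$ term is $\brp{3}{a}{s}$, and odd $n$ above $8a+2m+4$ have $s>a$. Unimodality and symmetry of $\qbinom{a+3}{3}$ show that $\brp{3}{a}{s}$ is at least its value near the ends, roughly $\brp{3}{a}{a}$, which is of order $a^2/12$ and exceeds $(a+1)/2$ once $a\ge4$. Near the top end $s\approx 3a$, we add the $k=5$ term: $n=10a+5+2s'$ with small $s'$.

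\emph{Large $n$.} Beyond $14a+7$ we use a simple additive argument. A representation of $n-(2a+1)$ with $r>p$ representations yields, after adjoining $2a+1$, more than $p$ representations of $n$. Alternatively, we note that for large $n$ the sum over $k$ contains at least two terms each of size $\ge p$.

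The main obstacle is the overlap regions, where one $k$-term falls off and the next $k$-term has not yet grown. The worst spots are even $n$ near $12a+6$ (the $k=4$ term near $s=4a$ and the $k=6$ term near $s=0$) and odd $n$ near $12a$–$14a$. In these regions we will add the two terms explicitly using $\brp{4}{a}{4a-u}=M_u$ for small $u$, together with the $k=6$ term $p_{\le 6}(u')$. We then check that the sum exceeds $\lfloor (a+1)/2\rfloor$, and this is exactly where the hypotheses $a\ge4$ and $p\le\lfloor (a+1)/2\rfloor$ should be used. If this direct approach is messy, the fallback is to show that $S_p$ contains $8a+2m+5,\dots,10a+6$ (a run of $2a+1$ consecutive integers) and use closure of $S_p$ under adding generators: $r(n+a_j)\ge r(n)$. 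This closure argument would also cover the large-$n$ range.
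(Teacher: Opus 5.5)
Your lower bound is correct and matches the paper's: at $n=8a+2m+4=4b+2m$ with $b=2a+1$, only $k=4$ contributes and $r(n)=M_m\le p$. For the upper bound, your fallback is the right mechanism; it is the paper's Ap\'ery-set argument with respect to $b$. As written, however, it fails. The run $8a+2m+5,\dots,10a+6$ has only $2a+2-2m<2a+1$ elements, so closure under adding $b$ does not reach every larger integer.

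The window actually required is $[4b+2m+1,\,5b+2m]=[8a+2m+5,\,10a+2m+5]$. Its right end cannot be lowered, because $5b+2m$ is the largest $p$-Ap\'ery element. Among the integers you dropped are the odd $n=5b+2s$ with $1\le s\le m$, and these are exactly the critical ones. There both $k=3$ and $k=5$ contribute, and symmetry of $\qbinom{a+3}{3}$ gives
$$
r(5b+2s)=\brp{3}{a}{2a+1+s}+\brp{5}{a}{s}=p_{\le3}(a-1-s)+p_{\le5}(s).
$$
Your bound ``$\brp{3}{a}{s}$ is at least roughly $\brp{3}{a}{a}$'' holds only for $a\le s\le 2a$. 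Here the $k=3$ term alone can drop to $\le p$: for $a=4$, $p=m=2$, $s=2$ it equals $p_{\le3}(1)=1$. So you must prove that the sum exceeds $\lfloor (a+1)/2\rfloor$.

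That is the one delicate estimate of the proof (Lemma~\ref{lem:minus-lower}). The two-part bounds $p_{\le5}(s)\ge\lfloor s/2\rfloor+1$ and $p_{\le3}(a-1-s)\ge\lfloor (a-1-s)/2\rfloor+1$ yield only $\lfloor (a+1)/2\rfloor$ when $a$ and $s$ are both odd. An extra partition must therefore be exhibited: $(s-2)+1+1$ of $s$ if $s\ge3$, or $(a-4)+1+1$ of $a-2$ if $s=1$. This is exactly where $a\ge4$ is needed. For $a=3$, $p=m=2$ one gets $r(37)=p_{\le5}(1)+p_{\le3}(1)=2$, and indeed $g_2(A_3^-)=37\ne32$.

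Your proposal instead places the obstacles near $12a+6$ and between $12a$ and $14a$, which closure makes irrelevant. Moreover, at $n=12a+6$ the $k=4$ term is $\brp{4}{a}{2a+1}$, near its maximum, not near $s=4a$. The proposal also only promises that the relevant sums will be checked.

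Smaller points:
\begin{itemize}
\item The identity $\brp{4}{a}{m+1}=M_{m+1}$ requires $m+1\le a$, which follows from $M_a>\lfloor (a+1)/2\rfloor$.
\item The claim $p_{\le3}(a)>\lfloor (a+1)/2\rfloor$ for $a\ge4$ needs the same extra-partition argument when $a$ is odd.
\item At $n=10a+5$ the $k=5$ term already contributes, contrary to your first reduction.
\end{itemize}
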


The second family is
$$
A_a^+=\{2a+1,2a+3,\ldots,4a+3\}.
$$
The following is the corrected positive-$p$ statement for the family
from Conjecture 7.1.

\begin{theorem}
\label{thm:plus}
For every $a\ge1$,
$$
g_1(A_a^+)=6a+5.
$$
Moreover, let $a\ge3$ and
$$
2\le p\le\left\lfloor\frac{a+1}{2}\right\rfloor.
$$
If $j\ge2$ is determined by
$$
M_j\le p<M_{j+1},
$$
then
$$
g_p(A_a^+)=8a+2j+4.
$$
\end{theorem}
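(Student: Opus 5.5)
The strategy is to reduce representations by the odd generators to bounded partitions. Write every generator of $A_a^+$ as $2a+1+2i$ with $0\le i\le a+1$. A representation of $n$ using $k$ generators then corresponds to a multiset $\{i_1,\dots,i_k\}\subset\{0,\dots,a+1\}$ with
$$
n=k(2a+1)+2(i_1+\cdots+i_k).
$$
So for fixed $k$, the number of such representations equals the number of partitions of $s=(n-k(2a+1))/2$ into at most $k$ parts, each at most $a+1$, i.e.\ $\brp{k}{a+1}{s}$. The parity of $n$ fixes the parity of $k$. Hence
$$
r_{A_a^+}(n)=\sum_{k\equiv n\ (2)}\brp{k}{a+1}{\tfrac{n-k(2a+1)}{2}} .
$$
For $n$ in the relevant range (roughly $n\le 8a+O(\sqrt a)$), only small $k$ contribute. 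For odd $n$ we have $k\in\{1,3,5\}$, and $k=5$ needs $n\ge 10a+5$, so it is absent. For even $n$ we have $k\in\{2,4\}$, and $k=6$ is absent. The $k=1$ term is $0$ or $1$. The $k=2$ term is at most about $(a+2)/2$. The $k=3$ term is $\brp{3}{a+1}{s}$. For $k=4$, $s=(n-8a-4)/2$ is small, so the bound $a+1$ is automatic once $s\le a+1$, and the term is $p_{\le4}(s)=M_s$.

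For $g_1$, I will check that $6a+5$ has exactly one representation. Odd $n$ here means $k\in\{1,3\}$: $k=1$ requires $n\le 4a+3$, and $k=3$ with $s=(6a+5-6a-3)/2=1$ gives $\brp{3}{a+1}{1}=1$. Then I will show $r(n)\ge2$ for all $n>6a+5$. For even $n\ge 4a+2$, the $k=2$ term gives about $\min(s,2a+2-s)/2+1$ representations, and this is $\ge2$ except near the ends of its range. There the $k=4$ term takes over, since it starts at $n=8a+4$. I expect a small bounded check to be needed for small $a$. For odd $n>6a+5$, the $k=3$ term with $s\ge2$ is $\brp{3}{a+1}{s}\ge2$ for $2\le s\le 3a+1$. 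Beyond that, $k=5$ and $k=3$ overlap. Small $a$ will be handled by direct computation.

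For $p\ge2$, the plan is to show two things. First, $r(8a+2j+4)\le p$. Second, $r(n)>p$ for all $n>8a+2j+4$. At $n=8a+4+2j$ (even), the $k=2$ term has $s=(6a+2+2j)/2=3a+1+j>2a+2$, hence is $0$. So $r(n)=M_j\le p$. This is presumably why the answer is governed by $M_j$, and the hypothesis $p\le\lfloor (a+1)/2\rfloor$ keeps $j$ small enough that $M_j$ has the unbounded form. For even $n=8a+4+2t$ with $t>j$, $r(n)\ge M_t\ge M_{j+1}>p$. This needs $t\le a+1$ before the bound bites, and beyond that I need a crude lower bound such as $\brp{4}{a+1}{t}\ge M_{j+1}$, using unimodality and symmetry of the Gaussian polynomial $\qbinom{a+5}{4}$ coefficients about $2(a+1)$. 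For odd $n>8a+2j+4$, the $k=3$ term has $s=(n-6a-3)/2\ge a+j+1$. Here the Gaussian polynomial $\qbinom{a+4}{3}$ is symmetric about $3(a+1)/2$ and unimodal, so on the range $a+1\le s\le 2a+2$ its coefficients are near the peak, of size about $(a+1)^2/12$. This far exceeds $p\le (a+1)/2$ once $a\ge3$ is moderately large. Beyond $s>2a+2$, the $k=5$ term ($n\ge10a+5$) contributes $\brp{5}{a+1}{\cdot}$ and keeps $r$ large, again by unimodality.

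The main obstacle is the uniform lower bounds: $r(n)>p$ for every $n$ beyond the candidate, across the transitions where one $k$-term decays and the next starts. This is the odd-$n$ stretch near $s$ close to $3a+3$ (the $\brp{3}{a+1}{}$ tail) before $k=5$ kicks in, plus the analogous even transitions. I would first try to prove a clean estimate of the form $\brp{3}{a+1}{s}\ge\min(s,3a+3-s)/\,\text{const}$ and pair it with the $k=5$ or $k=4$ term on the complementary range. For small $a$, I would verify directly.
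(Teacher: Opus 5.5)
\textbf{The gap: the upper bound on $g_p$.} Your proposal never proves that $r(n)>p$ for every $n>g$. You attack this directly for all $n$, which means controlling infinitely many layer transitions: $k=3\to5\to7\to\cdots$ for odd $n$, and $k=2\to4\to6\to\cdots$ for even $n$. The phrases ``again by unimodality'' and ``verify small $a$ directly'' do not cover these transitions, and you leave the point as the open main obstacle. Your picture of that obstacle is also off. The $k=5$ layer begins at $n=10a+5$, i.e.\ at $k=3$-index $s=2a+1$, which is well before the tail of $\brp{3}{a+1}{s}$ near $s=3a+3$.

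The missing idea is monotonicity. For $c\in A$, adding one summand $c$ injects the representations of $n$ into those of $n+c$. Hence $r(n+b)\ge r(n)$ and $S_p+b\subseteq S_p$. It therefore suffices to find, in each residue class $2s \bmod b$ ($0\le s\le 2a$), one element of $S_p$ not exceeding $g+b=5b+2j$. This is the paper's Ap\'ery-set argument, $g_p=\max\Ap_p(A;b)-b$.

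\textbf{How your layer formula then finishes.} After this reduction only the layers $k\le5$ matter, and finitely many uniform bounds suffice:
\begin{itemize}
\item $r(4b+2s)=M_s\ge M_{j+1}>p$ for $j<s\le a+1$;
\item $r(3b+2s)=\brp{3}{a+1}{s}\ge p_{\le3}(a+1)\ge\lfloor(a+1)/2\rfloor+1$ for $a+2\le s\le 2a$, using the symmetry and unimodality of $\qbinom{a+4}{3}$ that you already invoke;
\item $r(5b+2s)=p_{\le5}(s)+p_{\le3}(a+2-s)\ge\lfloor s/2\rfloor+\lfloor(a+2-s)/2\rfloor+2>\lfloor(a+1)/2\rfloor$ for $1\le s\le j$, together with $r(5b)=p_{\le3}(a+2)$.
\end{itemize}
The last sum bound is exactly what your hoped-for estimate across the $k=3$/$k=5$ overlap must deliver. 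It holds for all $a\ge3$, with no ``moderately large'' caveat. For $p=1$ the same reduction leaves only $6a+6\le n\le 8a+6$, where two explicit representations per residue class work for every $a\ge1$.

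\textbf{Two smaller points.} First, you need $j\le a$ for the $k=4$ layer at $8a+4+2j$ to equal $M_j$. This follows from $M_j\ge\lfloor j/2\rfloor+1$ and $M_j\le p\le\lfloor(a+1)/2\rfloor$.

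Second, the $k=2$ index at $n=8a+4+2j$ is $(n-2b)/2=2a+1+j$, not $3a+1+j$. So that layer vanishes only because $j\ge2$. For $j=1$, the extra representation $(b+2a+2)+(b+2a+2)$ of $8a+6$ is precisely why $p=1$ has the different answer $6a+5$.
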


\begin{remark}
The $p=0$ assertions are the classical Frobenius formulas already
recorded with the original conjectures.  The proofs below address the
positive-$p$ assertions, where the restricted-partition structure is
essential. The hypothesis $a\geq 4$ in Theorem 1.2 is imposed in order to obtain
	a uniform statement and proof. Among the omitted small cases, the
	formula remains valid for $(a,p)=(2,1)$ and $(3,1)$, whereas it
	fails for
	$$
	(a,p)=(1,1)
	\quad\text{and}\quad
	(a,p)=(3,2).
	$$
	Indeed,
	$$
	g_1(A_1^-)=22\ne14,
	\qquad
	g_2(A_3^-)=37\ne32.
	$$
\end{remark}

\section{Common partition and Ap\'ery lemmas}

\begin{lemma}
\label{lem:gaussian}
For $r,L\ge0$,
$$
\sum_{n=0}^{rL}\brp{r}{L}{n}q^n
=
\qbinom{L+r}{r}
:=
\prod_{i=1}^{r}\frac{1-q^{L+i}}{1-q^i}.
$$
The expression on the right is the Gaussian binomial coefficient
(or $q$-binomial coefficient), with the empty product interpreted as
$1$ when $r=0$.
Consequently, the finite sequence $\brp{r}{L}{0},\allowbreak \brp{r}{L}{1},\allowbreak \ldots,\allowbreak \brp{r}{L}{rL}$ is symmetric and unimodal.
\end{lemma}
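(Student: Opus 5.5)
The plan has three parts. First I establish the generating-function identity by induction on $r+L$, using the standard Pascal-type recursion. Then symmetry is read off from complementation of Young diagrams, and unimodality is quoted from the classical theory of Gaussian polynomials.

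For the identity, I classify a partition counted by $\brp{r}{L}{n}$ according to whether it has exactly $r$ nonzero parts. If $\lambda_r\ge1$, subtracting $1$ from every part gives a partition of $n-r$ into at most $r$ parts, each at most $L-1$. If $\lambda_r=0$, it is a partition of $n$ into at most $r-1$ parts, each at most $L$. This gives
$$
\brp{r}{L}{n}=\brp{r}{L-1}{n-r}+\brp{r-1}{L}{n},
$$
valid for $r,L\ge1$, with boundary values $\brp{0}{L}{n}=\brp{r}{0}{n}=[n=0]$. On the product side, the corresponding identity is
$$
\qbinom{L+r}{r}=q^r\qbinom{L+r-1}{r}+\qbinom{L+r-1}{r-1},
$$
which follows by direct algebra from the product formula: put both terms over the common factor $\prod_{i=1}^{r}(1-q^i)^{-1}$ and use $(1-q^{L})q^r+(1-q^r)=1-q^{L+r}$. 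The boundary cases agree, since both sides equal $1$. Induction on $r+L$ then proves the identity. The upper summation limit $rL$ is correct because no admissible partition has $n>rL$.

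For symmetry, I use the map $\lambda_i\mapsto L-\lambda_{r+1-i}$. It is an involution on nonincreasing $r$-tuples with entries in $[0,L]$, and it sends size $n$ to size $rL-n$. Hence $\brp{r}{L}{n}=\brp{r}{L}{rL-n}$.

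Unimodality is the only genuinely nontrivial step. I would not reprove it; instead I would cite Sylvester's theorem that Gaussian coefficients are unimodal. References include Andrews' \emph{Theory of Partitions}, Theorem 3.10, and also Stanley's $\mathfrak{sl}_2$ argument, or O'Hara's constructive proof. Only the cases $r=3$ and $r=4$ are used later. For these, a self-contained fallback is possible. For $r=3$, I would write the coefficient explicitly as a quasi-polynomial in $n$ on $0\le n\le \lfloor 3L/2\rfloor$. One shows that the first differences are nonnegative there, by counting the triples for which adding one to the largest admissible part is injective. Symmetry then finishes the argument. I expect this case check to be the main obstacle if a self-contained proof is demanded, so I would rely on the classical citation.
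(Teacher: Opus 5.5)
Your proposal is correct and essentially matches the paper's proof: symmetry comes from complementation in the $r\times L$ box, and unimodality is cited (the paper cites O'Hara). The one difference is that you verify the Gaussian generating function via the recursion $\brp{r}{L}{n}=\brp{r}{L-1}{n-r}+\brp{r-1}{L}{n}$ and the matching $q$-Pascal identity, which the paper simply quotes as standard. Relying on the citation is the right choice, because the $r=3$ fallback as phrased would not work directly: incrementing the largest incrementable part is not injective, e.g.\ for $L=2$ it sends both $(2,0,0)$ and $(1,1,0)$ to $(2,1,0)$.
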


\begin{proof}
The Ferrers diagram of a partition counted by $\brp{r}{L}{n}$
fits inside an $r\times L$ rectangle.  The standard generating
function for such diagrams is the Gaussian polynomial
$\qbinom{L+r}{r}$.  The symmetry follows either by taking complements
inside the rectangle or from the reciprocity of the Gaussian polynomial. 
Its unimodality follows from O'Hara's constructive proof of the
	unimodality of Gaussian coefficients; see
	\cite[Theorem 3.5]{OHara}.
\end{proof}

\begin{corollary}
\label{cor:three-part-bound}
For $L\ge0$ and $L\le s\le2L$,
$$
\brp{3}{L}{s}\ge\brp{3}{L}{L}=p_{\le3}(L).
$$
\end{corollary}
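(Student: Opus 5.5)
We derive the corollary directly from Lemma~\ref{lem:gaussian} with $r=3$. In that case the sequence $\brp{3}{L}{0},\ldots,\brp{3}{L}{3L}$ is symmetric about $3L/2$ and unimodal. The interval $[L,2L]$ is centered at $3L/2$ and has endpoints $L$ and $2L=3L-L$. So every index $s$ with $L\le s\le 2L$ lies between the index $L$ and its mirror image $2L$.

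The first step is to use unimodality. A unimodal sequence is nondecreasing up to some peak index $t$ and nonincreasing after it. Let $s$ satisfy $L\le s\le 2L$. If $s\le t$, then the values are nondecreasing from index $L$ to index $s$, so $\brp{3}{L}{s}\ge\brp{3}{L}{L}$. If $s\ge t$, then the values are nonincreasing from index $s$ to index $2L$, so $\brp{3}{L}{s}\ge\brp{3}{L}{2L}$.

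The second step is to use symmetry. It gives $\brp{3}{L}{2L}=\brp{3}{L}{3L-2L}=\brp{3}{L}{L}$, so both cases yield the same bound. (Equivalently, the minimum of a unimodal sequence over an index interval is attained at one of the endpoints, and symmetry makes the two endpoint values equal.) The case $L=0$ is trivial, since then $s=0$.

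The final step is the identity $\brp{3}{L}{L}=p_{\le3}(L)$. This holds because every part of a partition of $L$ is at most $L$, so the bound on the parts is automatic; this is exactly the definition $p_{\le r}(n)=\brp{r}{n}{n}$ given after Definition~\ref{def:bounded-partition}. There is no real obstacle here. The only point needing care is that unimodality is applied with the peak possibly inside the interval, and the two cases above handle that.
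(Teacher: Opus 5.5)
Your proposal is correct and follows essentially the same route as the paper: unimodality puts the minimum over $[L,2L]$ at an endpoint, symmetry about $3L/2$ makes the two endpoint values equal, and $\brp{3}{L}{L}=p_{\le3}(L)$ because the bound on the parts is automatic. Your two-case split on the position of the peak simply spells out the endpoint-minimum step that the paper states in one line.
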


\begin{proof}
By Lemma \ref{lem:gaussian}, the coefficient sequence is symmetric about
$3L/2$ and unimodal.  Hence its minimum on $[L,2L]$ occurs at an
endpoint, and symmetry gives $\brp{3}{L}{2L}=\brp{3}{L}{L}$.
Finally, every partition of $L$ into at most three parts automatically
has largest part at most $L$, so
$\brp{3}{L}{L}=p_{\le3}(L)$.
\end{proof}

\begin{lemma}
\label{lem:index-bound}
If 
$$
M_t\le\left\lfloor\frac{a+1}{2}\right\rfloor,
$$ 
then $t\le a$.
\end{lemma}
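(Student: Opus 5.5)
We prove the contrapositive: if $t\ge a+1$, then $M_t>\lfloor (a+1)/2\rfloor$. Two facts about $M_m=p_{\le4}(m)$ are enough.

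First, $(M_m)$ is nondecreasing. Adding a part equal to $1$ is not available when there are already four parts, so instead we use the injection that increases the largest part by $1$. This sends partitions of $m$ into at most four parts injectively to partitions of $m+1$ into at most four parts. Hence $M_{m+1}\ge M_m$ for all $m\ge0$.

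Second, $M_m\ge p_{\le2}(m)=\lfloor m/2\rfloor+1$, since partitions into at most two parts are among those counted by $M_m$. In particular, $M_m>m/2$.

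Now suppose $t\ge a+1$. Monotonicity and the second bound give
$$
M_t\ge M_{a+1}\ge \left\lfloor\frac{a+1}{2}\right\rfloor+1>\left\lfloor\frac{a+1}{2}\right\rfloor.
$$
This contradicts the hypothesis, so $t\le a$.

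If $t$ is allowed to be negative, the claim is trivial in that range, because then $t<0\le a$. The only points needing care are the monotonicity injection and the convention $M_m$ for $m\ge0$. We could also bypass monotonicity entirely: apply $M_t\ge\lfloor t/2\rfloor+1$ directly with $t\ge a+1$, which gives $\lfloor t/2\rfloor\ge\lfloor (a+1)/2\rfloor$ and hence the same contradiction. This direct route is cleanest, so there is no real obstacle.
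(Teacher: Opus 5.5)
Your proof is correct and rests on the same key bound as the paper, $M_t\ge p_{\le2}(t)=\lfloor t/2\rfloor+1$. Your ``direct route'' is the paper's argument in contrapositive form; the paper writes it as the chain $a\ge 2M_t-1\ge 2\lfloor t/2\rfloor+1\ge t$. The monotonicity injection is valid but not needed.
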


\begin{proof}
Since
$$
M_t=p_{\le4}(t)\ge p_{\le2}(t)
=\left\lfloor\frac t2\right\rfloor+1,
$$
the assumption implies
$$
a\ge2M_t-1
\ge2\left\lfloor\frac t2\right\rfloor+1
\ge t.
$$
\end{proof}

\begin{lemma}
\label{lem:closure}
For every $p\ge0$,
$$
S_p(A)+A\subseteq S_p(A).
$$
In particular, if $b\in A$, then
$$
S_p(A)+b\subseteq S_p(A).
$$
\end{lemma}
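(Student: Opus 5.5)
The inclusion $S_p(A)+A\subseteq S_p(A)$ reduces to the monotonicity inequality $r_A(n+a_i)\ge r_A(n)$ for every $n\in\Nzero$ and every $a_i\in A$. Once this inequality holds, any $n$ with $r_A(n)>p$ gives $r_A(n+a_i)\ge r_A(n)>p$, so $n+a_i\in S_p(A)$. The second statement is then the special case $a_i=b$.

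To prove the inequality, I will construct an injection from the set of representations of $n$ into the set of representations of $n+a_i$. Given $(x_1,\ldots,x_k)\in\Nzero^k$ with $x_1a_1+\cdots+x_ka_k=n$, send it to the vector obtained by replacing $x_i$ with $x_i+1$ and leaving every other coordinate unchanged. The image lies in $\Nzero^k$, and its weighted sum is $n+a_i$, so it is a representation of $n+a_i$. The map is injective because the original vector is recovered by subtracting $1$ from the $i$-th coordinate. Hence the number of representations of $n$ is at most the number of representations of $n+a_i$, which is exactly $r_A(n)\le r_A(n+a_i)$.

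There is no real obstacle here. The only point to check is that $r_A$ counts multiplicity vectors, so distinct vectors remain distinct after the shift and no identification of representations can occur. This is immediate from the definition of $r_A(n)$ as the cardinality of a set of tuples. The case $p=0$ is included without change: it says that the ordinary numerical semigroup $S_0(A)$ is closed under adding generators.
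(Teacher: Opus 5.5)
Your proposal is correct and is essentially the paper's proof: both add $1$ to the coordinate of the chosen generator, which gives an injection from the factorizations of $n$ into those of $n+c$. This yields $r_A(n+c)\ge r_A(n)$, and closure of $S_p(A)$ under adding elements of $A$ follows immediately.
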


\begin{proof}
Fix $c\in A$.  In multiplicity-vector notation, adding $c$ to a
factorization of $n$ adds $1$ to the coordinate belonging to $c$.
This gives an injection from the factorizations of $n$ to those of
$n+c$.  Thus 
$$
r_A(n+c)\ge r_A(n).
$$ 
If $n\in S_p(A)$, then $r_A(n)>p$, and hence $n+c\in S_p(A)$.
\end{proof}

\begin{lemma}
\label{lem:apery-formula}
If $b\in A$, then
$$
g_p(A)=\max\Ap_p(A;b)-b.
$$
\end{lemma}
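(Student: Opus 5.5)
The plan is to characterize $\Nzero\setminus S_p(A)$ through the Apéry set and use Lemma~\ref{lem:closure} to reduce everything to residue classes modulo $b$. First I need that $\Ap_p(A;b)$ is well defined for any $b\in A$ (the lemma's statement allows any $b\in A$, not only $\min A$). Since $\gcd(A)=1$, every sufficiently large integer $n$ has $r_A(n)>p$; this is the standard fact that $r_A(n)\to\infty$, or it can be seen by noting that $S_0(A)$ is cofinite and that $p+1$ distinct representations can be built from a fixed large multiple. Hence each residue class $i\pmod b$ contains elements of $S_p(A)$, and I let $m_i^{(p)}$ be the least such element, so that $\Ap_p(A;b)$ is a finite set with $b$ elements.

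Next I describe the complement of $S_p(A)$ class by class. Fix a residue $i$. By Lemma~\ref{lem:closure}, $S_p(A)+b\subseteq S_p(A)$, so every $n\equiv i\pmod b$ with $n\ge m_i^{(p)}$ lies in $S_p(A)$. By minimality, every $n\equiv i$ with $0\le n<m_i^{(p)}$ lies outside $S_p(A)$. So within the class of $i$, the complement $\Nzero\setminus S_p(A)$ is exactly the set of nonnegative $n\equiv i$ below $m_i^{(p)}$. Its largest element is $m_i^{(p)}-b$, provided $m_i^{(p)}\ge b$; otherwise this part of the complement is empty.

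Taking the maximum over $i$ gives
$$
g_p(A)=\max_i\bigl(m_i^{(p)}-b\bigr)=\max\Ap_p(A;b)-b.
$$
The one point that needs care is whether some class actually contributes, that is, whether $\max\Ap_p(A;b)\ge b$. For $p\ge1$ this is automatic: every $n<b$ that lies in the class of $0$ has $r_A(n)\le1\le p$. Indeed, $r_A(0)=1$, so $0\notin S_p(A)$, which forces $m_0^{(p)}\ge b$ and makes the complement nonempty. For $p=0$ we have $m_0^{(0)}=0$, and the usual convention applies: the complement is nonempty whenever $b>1$, in which case some $m_i^{(0)}>b$. This is the classical Selmer–Brauer formula, so no new work is needed there.

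I expect no real obstacle. The only point requiring care is the existence (finiteness) of the $m_i^{(p)}$, which rests on $r_A(n)$ being eventually larger than $p$. I would justify this by taking $N$ with every $n\ge N$ in $S_0(A)$, and noting that for $n\ge N+(p+1)\operatorname{lcm}(a_1,a_2)$ one can trade $a_2$ copies of $a_1$ for $a_1$ copies of $a_2$ in $p+1$ distinct ways.
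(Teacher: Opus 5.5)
Your proof is correct and is essentially the paper's argument. Both rely only on the minimality of $m_i^{(p)}$ in its residue class and the closure $S_p(A)+b\subseteq S_p(A)$ from Lemma~\ref{lem:closure}, and your class-by-class description of $\Nzero\setminus S_p(A)$ repackages the paper's two steps ($W-b\notin S_p(A)$, and $n>W-b\Rightarrow n\in S_p(A)$, with $W=\max\Ap_p(A;b)$).

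You are also more careful than the paper, which tacitly assumes that the $p$-Ap\'ery elements exist and that $W\ge b$. One small slip: trading $a_2$ copies of $a_1$ for $a_1$ copies of $a_2$ shifts by $a_1a_2$, not $\operatorname{lcm}(a_1,a_2)$. So either take the threshold $N+p\,a_1a_2$, or trade $\operatorname{lcm}(a_1,a_2)/a_1$ copies of $a_1$ for $\operatorname{lcm}(a_1,a_2)/a_2$ copies of $a_2$.
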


\begin{proof}
This identity is established in
\cite[Corollary 1, Eq.(4)]{Komatsu2024}.  We include the following
short alternative proof for completeness.

Write $W=\max\Ap_p(A;b)$.  By minimality of the Ap\'ery element in
its residue class, $W-b\notin S_p(A)$, so $g_p(A)\ge W-b$.
If $n>W-b$, then the least element $m_i^{(p)}$ in the residue class
of $n$ satisfies $m_i^{(p)}\le W<n+b$.  Hence
$n=m_i^{(p)}+kb$ for some $k\ge0$.  Lemma \ref{lem:closure} gives
$n\in S_p(A)$.  Therefore $g_p(A)=W-b$.
\end{proof}

\section{Proof for the family \texorpdfstring{$A_a^-$}{A-a-minus}}

Throughout this section, let
$$
b=2a+1,\qquad
A_a^-=\{b+2t:0\le t\le a\},
$$
and write $r_-(n)=r_{A_a^-}(n)$.

\begin{lemma}
\label{lem:minus-four}
For $0\le s\le a$,
$$
r_-(4b+2s)=p_{\le4}(s).
$$
\end{lemma}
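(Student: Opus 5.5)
We count the multiplicity vectors $(x_0,\dots,x_a)$ with $\sum_t x_t(b+2t)=4b+2s$. Write $k=\sum_t x_t$ for the number of summands used. The plan is to show that every representation uses exactly $k=4$ summands, and then to read off the count as $p_{\le4}(s)$.

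\emph{Step 1: forcing $k=4$.} Each summand lies in $[b,\,4a+1]=[b,\,2b-1]$. A representation with $k$ summands therefore gives a total in $[kb,\;k(2b-1)]$.
\begin{itemize}[leftmargin=*]
\item If $k\ge5$, the total is at least $5b=4b+b=4b+2a+1$. Since $s\le a$, we have $4b+2s\le 4b+2a<5b$, so $k\ge5$ is impossible.
\item If $k\le3$, the total is at most $3(2b-1)=6b-3$, which is certainly at least $4b+2s$. So the size bound alone does not exclude $k\le 3$; a parity argument is needed.
\end{itemize}
Every element $b+2t$ is odd, so a sum of $k$ of them has the parity of $k$. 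The target $4b+2s$ is even, hence $k$ is even, and it remains to exclude $k=2$. Two summands give at most $2(4a+1)=8a+2$. The target satisfies $4b+2s=8a+4+2s\ge 8a+4>8a+2$, so $k=2$ is impossible. The case $k=0$ is excluded because the target is positive. Hence $k=4$.

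\emph{Step 2: the bijection.} With $k=4$, a representation is an unordered multiset $\{b+2t_1,\dots,b+2t_4\}$ with $0\le t_i\le a$. The equation becomes $4b+2\sum_i t_i=4b+2s$, i.e. $\sum_i t_i=s$. Sort so that $t_1\ge t_2\ge t_3\ge t_4\ge 0$. This gives a bijection between multiplicity vectors and partitions of $s$ into at most four parts, each at most $a$; the inverse sends such a partition to the vector recording how often each $t$ occurs. So the count is $\brp{4}{a}{s}$. Since $s\le a$, every part is at most $s\le a$, so the bound is automatic and $\brp{4}{a}{s}=p_{\le4}(s)$, by the remark following Definition~\ref{def:bounded-partition}.

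The only delicate point is Step 1. The upper bound $s\le a$ is exactly what rules out five summands. Excluding $k=2$ (and hence all $k<4$) needs the parity observation together with the fact that the largest generator is $4a+1<2b$.
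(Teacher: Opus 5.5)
Your proof is correct and follows essentially the same route as the paper: parity forces an even number of summands, and size bounds exclude two summands and more than four (the paper writes these as $t_1+t_2=s+b>2a$ and $\sum t_i<0$, which are the same inequalities in shifted form). With exactly four summands, $\sum t_i=s$ and $s\le a$ give the bijection with partitions counted by $p_{\le4}(s)$; you are slightly more explicit than the paper in dismissing $k=0$ and in excluding all $k\ge5$ by size rather than only even $k\ge6$.
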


\begin{proof}
Suppose that a representation has $q$ summands:
$$
4b+2s=\sum_{i=1}^{q}(b+2t_i),
\qquad 0\le t_i\le a.
$$
Then
$$
2\sum_{i=1}^{q}t_i=(4-q)b+2s.
$$
Since $b$ is odd, $q$ is even.  If $q=2$, then
$$
t_1+t_2=s+b>2a,
$$
which is impossible.  If $q\ge6$, then
$$
\sum_{i=1}^{q}t_i
=s-\frac{q-4}{2}b<0,
$$
which is also impossible.  Thus $q=4$, and cancellation of $4b$
gives
$$
t_1+t_2+t_3+t_4=s.
$$
Because $s\le a$, the bounds $t_i\le a$ are automatic.  After the
indices are arranged in nonincreasing order, the representations are
in bijection with the partitions of $s$ into at most four parts.
\end{proof}

\begin{lemma}
\label{lem:minus-five}
For $0\le s\le a$,
$$
r_-(5b+2s)
=p_{\le5}(s)+p_{\le3}(a-s-1).
$$
Moreover, for $a+1\le s\le2a$,
$$
r_-(3b+2s)=\brp{3}{a}{s}.
$$
\end{lemma}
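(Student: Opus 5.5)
The plan is to repeat the parity-and-count argument of Lemma~\ref{lem:minus-four}. Write a representation with $q$ summands as $\sum_{i=1}^{q}(b+2t_i)$ with $0\le t_i\le a$, and equate it with the target. For $5b+2s$ this gives
$$
2\sum_{i=1}^{q}t_i=(5-q)b+2s .
$$
Since $b$ is odd, $q$ must be odd. I then rule out every $q$ except $q\in\{3,5\}$ and count each surviving case separately.

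For $q=1$ we would need $t_1=2b+s>a$, which is impossible. For $q\ge7$ we get $\sum t_i=s-\frac{q-5}{2}b\le s-b<0$, because $s\le a<b$. For $q=5$, cancelling $5b$ leaves $t_1+\cdots+t_5=s$. Since $s\le a$, the upper bounds $t_i\le a$ are automatic. After sorting, these representations correspond bijectively to partitions of $s$ into at most five parts, giving $p_{\le5}(s)$.

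For $q=3$ we need $t_1+t_2+t_3=b+s=2a+1+s$ with $0\le t_i\le a$. This is the step requiring a small idea: replace each $t_i$ by its complement $a-t_i$. This is an order-reversing bijection onto triples in $[0,a]$ with sum $3a-(2a+1+s)=a-s-1$. Since $a-s-1\le a$, the upper bound is again automatic, so the count is $p_{\le3}(a-s-1)$. When $s=a$ this value is $0$ by the convention $p_{\le r}(n)=0$ for $n<0$, which agrees with $3a<3a+1$. Adding the $q=3$ and $q=5$ contributions gives the first formula.

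For the second claim, with $a+1\le s\le2a$, the analogous equation is $2\sum t_i=(3-q)b+2s$, so again $q$ is odd. For $q=1$ we would need $t_1=b+s>a$, which is impossible. For $q\ge5$ we get $\sum t_i=s-\frac{q-3}{2}b\le s-b<0$, since $s\le2a<b$. Only $q=3$ remains, and there $t_1+t_2+t_3=s$ with $0\le t_i\le a$. Sorting nonincreasingly, this is exactly the set counted by $\brp{3}{a}{s}$. Here the bound $a$ is genuinely active because $s>a$. I expect no real obstacle beyond checking the strict inequalities against $b=2a+1$ at the boundary values $s=a$ and $s=2a$.
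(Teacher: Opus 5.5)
Your proposal is correct and follows the paper's proof essentially step for step. Parity forces $q$ odd, $q=1$ and $q\ge7$ (respectively $q\ge5$) are excluded by size, $q=5$ gives $p_{\le5}(s)$, the complement $u_i=a-t_i$ turns the $q=3$ case into $p_{\le3}(a-s-1)$, and the second formula reduces to $q=3$ with the active bound $a$.
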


\begin{proof}
First consider $5b+2s$.  If a representation has $q$ summands, then
$$
2\sum_{i=1}^{q}t_i=(5-q)b+2s.
$$
Parity forces $q$ to be odd.  The cases $q=1$ and $q\ge7$ are
impossible, so only $q=5$ and $q=3$ occur.

For $q=5$, cancellation gives
$$
t_1+\cdots+t_5=s.
$$
Since $s\le a$, these representations are counted by
$p_{\le5}(s)$.  For $q=3$,
$$
t_1+t_2+t_3=s+b=s+2a+1.
$$
For simplicity, set $u_i=a-t_i$.  Then
$$
u_1+u_2+u_3=a-s-1.
$$
When the right-hand side is nonnegative, its size is at most $a$, so
the upper bounds $u_i\le a$ are automatic.  These representations are
therefore counted by $p_{\le3}(a-s-1)$; our convention makes this term
zero when $a-s-1<0$.

Now let $a+1\le s\le2a$ and consider $3b+2s$.  A representation with
$q$ summands satisfies
$$
2\sum_{i=1}^{q}t_i=(3-q)b+2s.
$$
Again $q$ is odd.  The case $q=1$ would require $t_1=s+b>a$, and
if $q\ge5$, then
$$
\sum_{i=1}^{q}t_i
\le s-b<0
$$
because $s\le2a=b-1$.  Hence $q=3$, and the representation condition
is precisely
$$
a\ge t_1\ge t_2\ge t_3\ge0,
\qquad t_1+t_2+t_3=s.
$$
By Definition~\ref{def:bounded-partition}, the number of such triples is
$\brp{3}{a}{s}$.
\end{proof}

\begin{lemma}
\label{lem:minus-lower}
Let $a\ge4$.  Then
$$
p_{\le3}(a)>
\left\lfloor\frac{a+1}{2}\right\rfloor
$$
and, for every $0\le s\le a$,
$$
p_{\le5}(s)+p_{\le3}(a-s-1)>
\left\lfloor\frac{a+1}{2}\right\rfloor.
$$
\end{lemma}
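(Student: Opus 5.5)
We prove the two inequalities separately, using only the closed form for $p_{\le3}$ and crude lower bounds. Write $h=\lfloor (a+1)/2\rfloor$, so that $h\le (a+1)/2$.

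For the first inequality we use the classical formula $p_{\le3}(n)=\operatorname{round}\bigl((n+3)^2/12\bigr)$. A cruder bound also suffices. Counting only the partitions of $a$ with at most two parts gives $\lfloor a/2\rfloor+1$ of them. Adding the three-part partitions $(a-2,1,1)$ and, when $a\ge4$, $(a-3,2,1)$ gives
$$
p_{\le3}(a)\ge\left\lfloor\frac a2\right\rfloor+3 .
$$
This is at least $h+2>h$, because $\lfloor a/2\rfloor+1\ge h$.

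For the second inequality we split on $s$ and bound the two summands separately.

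\emph{Case $s$ large.} We have $p_{\le5}(s)\ge p_{\le2}(s)=\lfloor s/2\rfloor+1$. We also have $p_{\le5}(s)\ge p_{\le3}(s)$, which grows quadratically. So once $s$ is moderately large, say $s\ge a-1$, we get $p_{\le5}(s)>h$. More precisely, $p_{\le3}(s)\ge \lfloor s/2\rfloor +1+\lfloor (s-1)/2\rfloor$ for $s\ge3$, by adding the partitions $(x,y,1)$. This is roughly $s$, which exceeds $(a+1)/2$ as soon as $s\gtrsim a/2$.

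\emph{Case $s$ small.} Set $n=a-s-1$. Then $p_{\le3}(n)\ge \lfloor n/2\rfloor+1+\lfloor (n-1)/2\rfloor\approx n$ is large, while $p_{\le5}(s)\ge1$ always.

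The two regimes overlap, so the plan is to show that
$$
p_{\le5}(s)+p_{\le3}(a-s-1)\ \ge\ \bigl(\lfloor s/2\rfloor+1\bigr)+\bigl(\lfloor (a-s-1)/2\rfloor+1\bigr)
$$
already exceeds $h$ in most cases. Since $\lfloor x/2\rfloor+\lfloor y/2\rfloor\ge (x+y)/2-1$, this sum is at least $(a-1)/2-1+2=(a+1)/2\ge h$. That gives only $\ge h$, not strict. Also, if $a-s-1<0$, i.e.\ $s=a$, the second term is $0$ and this bound fails.

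\emph{Gaining one unit.} Each term needs one extra partition whenever its argument is at least $2$ (for example, $(x-2,1,1)$ contributes a three-part partition). When one argument is $\le1$, the other argument is $\ge a-2\ge2$ and its quadratic growth gives far more than the deficit.

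\emph{Endpoint $s=a$.} We need $p_{\le5}(a)>h$. This follows from $p_{\le5}(a)\ge p_{\le3}(a)$ and the first inequality.

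\emph{Endpoint $s=a-1$.} Here the second term is $p_{\le3}(0)=1$, and $p_{\le5}(a-1)\ge\lfloor (a-1)/2\rfloor+1+\lfloor (a-2)/2\rfloor$ handles it for $a\ge4$.

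The main obstacle is this bookkeeping of floors and parities at the boundary values $s\in\{a-1,a\}$ and small $s$. I would verify these boundary values explicitly, using $a\ge4$ to guarantee that each extra partition exists.
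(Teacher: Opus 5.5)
Your strategy is the paper's: bound each term below by its two-part count, note that this only gives $\ge\lfloor(a+1)/2\rfloor$, gain one unit from a three-part partition $(x-2,1,1)$, and handle $s=a$ via $p_{\le5}(a)\ge p_{\le3}(a)$. However, two of your concrete claims are false as stated, and the one nontrivial step is left unproved.

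\textbf{The first inequality.} For $a=4$ the triple $(a-3,2,1)=(1,2,1)$ is not a new partition: it reorders to $(2,1,1)=(a-2,1,1)$. So $p_{\le3}(a)\ge\lfloor a/2\rfloor+3$ fails there, since $p_{\le3}(4)=4$. Dropping it is harmless: $(a-2,1,1)$ alone gives $p_{\le3}(a)\ge\lfloor a/2\rfloor+2\ge\lfloor(a+1)/2\rfloor+1$. This is tidier than the paper's even/odd split.

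\textbf{The ``gaining one unit'' step.} $(x-2,1,1)$ is a three-part partition only for $x\ge3$, not $x\ge2$. Indeed $p_{\le5}(2)=p_{\le3}(2)=2=p_{\le2}(2)$. So the step that carries the whole lemma rests on a false threshold and an unquantified appeal to ``quadratic growth''. The boundary cases are also deferred rather than checked.

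\textbf{How to close the gap.} You need to identify where the extra unit is actually required. The bound $\lfloor s/2\rfloor+\lfloor(a-s-1)/2\rfloor+2$ already exceeds $\lfloor(a+1)/2\rfloor$ unless $s$ and $a-s-1$ are both odd. That forces $a$ odd, hence $a\ge5$. Then both arguments are odd, so one of two things holds:
\begin{itemize}
\item $s\ge3$, and $(s-2,1,1)$ is an extra partition counted by $p_{\le5}(s)$;
\item $s=1$, and $(a-4,1,1)$ is an extra partition of $a-2\ge3$ counted by $p_{\le3}(a-2)$.
\end{itemize}
This is exactly the paper's case analysis.

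\textbf{A shorter route you already have.} Your count $\lfloor n/2\rfloor+1+\lfloor(n-1)/2\rfloor$ equals exactly $n$, so $p_{\le3}(n)\ge n$ for every $n\ge1$. You used this bound in two separate regimes, but contrary to your remark those regimes do not overlap near $s\approx a/2$; for example, at $a=4$, $s=2$ neither regime gives a strict inequality. Instead, add the two bounds:
\begin{itemize}
\item for $1\le s\le a-2$, the sum is at least $s+(a-s-1)=a-1>(a+1)/2$, because $a\ge4$;
\item for $s\in\{0,a-1,a\}$, it is at least $a$.
\end{itemize}
The first inequality also follows at once from $p_{\le3}(a)\ge a$. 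This proves the lemma with no parity bookkeeping at all.
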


\begin{proof}
We have
$$
p_{\le3}(a)\ge p_{\le2}(a)
=\left\lfloor\frac a2\right\rfloor+1.
$$
This is already strict relative to
$\lfloor(a+1)/2\rfloor$ when $a$ is even.  If $a$ is odd, then
$a\ge5$, and the partition
$$
a=(a-2)+1+1
$$
is counted by $p_{\le3}(a)$ but not by $p_{\le2}(a)$.

For the second assertion, first let $0\le s\le a-1$.  The elementary
bounds
$$
p_{\le5}(s)\ge
\left\lfloor\frac s2\right\rfloor+1,\qquad
p_{\le3}(a-s-1)\ge
\left\lfloor\frac{a-s-1}{2}\right\rfloor+1
$$
give
$$
p_{\le5}(s)+p_{\le3}(a-s-1)
\ge
\left\lfloor\frac s2\right\rfloor+
\left\lfloor\frac{a-s-1}{2}\right\rfloor+2.
$$
If $a$ is even, the right-hand side equals
$a/2+1$, which is greater than
$\lfloor(a+1)/2\rfloor$.

Let $a=2r+1$ be odd.  If $s$ is even, the displayed lower bound is
$r+2>r+1$.  If $s$ is odd, it initially gives $r+1$.
For $s\ge3$, the partition $s=(s-2)+1+1$ shows that
$p_{\le5}(s)>p_{\le2}(s)$.  If $s=1$, then $a-2\ge3$, and
$(a-2)=(a-4)+1+1$ shows that
$p_{\le3}(a-2)>p_{\le2}(a-2)$.  Thus the inequality is strict in
every case.

Finally, if $s=a$, then
$$
p_{\le5}(a)\ge p_{\le3}(a)>
\left\lfloor\frac{a+1}{2}\right\rfloor,
$$
and $p_{\le3}(-1)=0$.
\end{proof}

\begin{proof}[Proof of Theorem~\ref{thm:minus}]
By the choice of $m$,
$$
M_m\le p\le\left\lfloor\frac{a+1}{2}\right\rfloor.
$$
Lemma~\ref{lem:index-bound} therefore gives $m\le a$.  Applying
Lemma~\ref{lem:minus-four} with $s=m$, we obtain
$$
r_-(4b+2m)=p_{\le4}(m)=M_m\le p.
$$
Thus
\begin{equation}
4b+2m\notin S_p(A_a^-). \label{eq:minus-notin}
\end{equation}

We next produce, in every residue class modulo $b$, an element of
$S_p(A_a^-)$ not exceeding $5b+2m$.  Since $b$ is odd,
multiplication by $2$ permutes $\Z/b\Z$.  Hence every residue class
has a unique form
$$
2s\pmod b,\qquad 0\le s\le2a.
$$
The following three cases partition this entire range.

\medskip
\noindent\textbf{Case 1: $m+1\le s\le a$.}
Lemma~\ref{lem:minus-four} and the monotonicity of $p_{\le4}$ give
$$
r_-(4b+2s)
=p_{\le4}(s)
\ge p_{\le4}(m+1)
=M_{m+1}>p.
$$
Therefore $4b+2s\in S_p(A_a^-)$.  Moreover,
$$
4b+2s\le4b+2a<5b+2m.
$$

\medskip
\noindent\textbf{Case 2: $a+1\le s\le2a$.}
By the second formula in Lemma~\ref{lem:minus-five} and
Corollary~\ref{cor:three-part-bound},
$$
r_-(3b+2s)
=\brp{3}{a}{s}
\ge p_{\le3}(a).
$$
Lemma~\ref{lem:minus-lower} now yields
$$
r_-(3b+2s)>
\left\lfloor\frac{a+1}{2}\right\rfloor\ge p.
$$
Hence $3b+2s\in S_p(A_a^-)$, and
$$
3b+2s\le3b+4a=5b-2<5b+2m.
$$

\medskip
\noindent\textbf{Case 3: $0\le s\le m$.}
The first formula in Lemma~\ref{lem:minus-five}, followed by
Lemma~\ref{lem:minus-lower}, gives
$$
r_-(5b+2s)
=p_{\le5}(s)+p_{\le3}(a-s-1)
>
\left\lfloor\frac{a+1}{2}\right\rfloor
\ge p.
$$
Thus $5b+2s\in S_p(A_a^-)$, and
$$
5b+2s\le5b+2m.
$$

These cases cover all $b$ residue classes.  Consequently,
\begin{equation}
\max\Ap_p(A_a^-;b)\le5b+2m. \label{eq:minus-upper}
\end{equation}
Case~3 with $s=m$ also shows that $5b+2m\in S_p(A_a^-)$.

We claim that this is the least member of $S_p(A_a^-)$ in the residue
class $2m\pmod b$.  Indeed, since $m\le a$, we have $0\le2m<b$.
Any smaller nonnegative integer in this residue class has the form
$kb+2m$ with $0\leq k<5$.  If such an integer belonged to $S_p(A_a^-)$,
then Lemma~\ref{lem:closure}, applied $4-k$ times to the generator
$b$, would imply
$$
4b+2m=(kb+2m)+(4-k)b\in S_p(A_a^-),
$$
contradicting \eqref{eq:minus-notin}.  Hence
$$
m_{2m}^{(p)}=5b+2m.
$$
Together with \eqref{eq:minus-upper}, this proves
$$
\max\Ap_p(A_a^-;b)=5b+2m.
$$
Finally, Lemma~\ref{lem:apery-formula} gives
$$
g_p(A_a^-)
=(5b+2m)-b
=4b+2m
=8a+2m+4.
$$
\end{proof}

\section{Proof for the family \texorpdfstring{$A_a^+$}{A-a-plus}}

Throughout this section, let
$$
b=2a+1,\qquad
A_a^+=\{b+2t:0\le t\le a+1\},
$$
and write $r_+(n)=r_{A_a^+}(n)$.

\subsection{The case \texorpdfstring{$p=1$}{p=1}}

\begin{lemma}
\label{lem:plus-p1-count}
For every $a\ge1, r_+(3b+2)=1.$
\end{lemma}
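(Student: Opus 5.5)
We prove $r_+(3b+2)=1$ in the same way as Lemmas~\ref{lem:minus-four} and~\ref{lem:minus-five}: we sort representations by their number of summands $q$ and use the parity of $b$.

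Write a representation as
$$
3b+2=\sum_{i=1}^{q}(b+2t_i),\qquad 0\le t_i\le a+1 .
$$
This is equivalent to
$$
2\sum_{i=1}^{q}t_i=(3-q)b+2 .
$$
Since $b$ is odd, the left side is even, so $(3-q)b$ must be even. Hence $q$ is odd.

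We now rule out every odd $q$ except $q=3$.

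\textbf{Case $q=1$.} We would need $t_1=b+1=2a+2$. This exceeds the bound $a+1$ because $a\ge1$, so there is no representation.

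\textbf{Case $q\ge5$.} Then
$$
\sum_i t_i=1-\frac{(q-3)b}{2}\le 1-b=-2a<0,
$$
which is impossible.

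\textbf{Case $q=3$.} Cancelling $3b$ leaves
$$
t_1+t_2+t_3=1 .
$$
Every $t_i\le 1\le a+1$, so the upper bounds hold automatically. Up to ordering, the only solution is $(1,0,0)$, which is the factorization $3b+2=(b+2)+b+b$.

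Since representations are counted as multiplicity vectors, i.e.\ unordered, this gives exactly one representation, and so $r_+(3b+2)=1$.

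The only step needing care is the case $q=1$. The new generator $4a+3=b+2(a+1)$ is still smaller than $3b+2=6a+5$, so we must use $a\ge1$ explicitly to exclude it. Every other step is routine.
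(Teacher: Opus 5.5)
Your proof is correct and follows the paper's argument essentially step for step: parity forces $q$ odd, $q=1$ fails because $t_1=2a+2>a+1$, $q\ge5$ forces a negative sum of the $t_i$, and $q=3$ leaves only the solution $(1,0,0)$. One small correction to your closing remark: the case $q=1$ is excluded for every $a\ge0$, and the hypothesis $a\ge1$ is really needed in the case $q\ge5$. For $a=0$, $q=5$ gives $\sum t_i=0$, and indeed $r_+(5)=2$ for $A_0^+=\{1,3\}$; your bound $1-b=-2a<0$ already handles this correctly.
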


\begin{proof}
Suppose that
$$
3b+2=\sum_{i=1}^{q}(b+2t_i),
\qquad 0\le t_i\le a+1.
$$
Then
$$
2\sum_{i=1}^{q}t_i=(3-q)b+2.
$$
Since $b$ is odd, $q$ is odd.  If $q=1$, then
$t_1=b+1=2a+2>a+1$.  If $q\ge5$, then
$$
\sum_{i=1}^{q}t_i
=1-\frac{q-3}{2}b<0.
$$
Thus $q=3$, and $t_1+t_2+t_3=1$.  The only unordered solution is
$(1,0,0)$.
\end{proof}

\begin{lemma}
\label{lem:plus-p1-cover}
Every residue class modulo $b$ contains an element of
$S_1(A_a^+)$ not exceeding $4b+2$.
\end{lemma}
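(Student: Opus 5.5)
The plan is to split the residues by the doubling trick used for $A_a^-$ and compute $r_+$ directly by the summand-count method of Lemma~\ref{lem:plus-p1-count}. Since $b$ is odd, every residue class modulo $b$ is uniquely of the form $2s\pmod b$ with $0\le s\le 2a$. For each such $s$ I exhibit an integer $n\equiv 2s\pmod b$ with $n\le 4b+2$ and $r_+(n)\ge2$, i.e.\ $n\in S_1(A_a^+)$.

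\textbf{Case $2\le s\le 2a$.} I use $n=2b+2s\le 2b+4a<4b+2$. A representation $2b+2s=\sum_{i=1}^{q}(b+2t_i)$ with $0\le t_i\le a+1$ gives
$$
2\sum t_i=(2-q)b+2s .
$$
Parity forces $q$ to be even. The value $q=0$ is impossible since $n>0$. For $q\ge4$ the sum $\sum t_i=s-\frac{q-2}{2}b$ is negative, because $s\le 2a<b$. Hence $q=2$, and $r_+(2b+2s)$ is the number of unordered pairs $x\ge y\ge0$ with $x+y=s$ and $x\le a+1$. Counting by $x$, this number is
$$
\min(s,a+1)-\lceil s/2\rceil+1 .
$$
It remains to check that this is at least $2$. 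If $2\le s\le a+1$, it equals $s-\lceil s/2\rceil+1=\lfloor s/2\rfloor+1\ge2$. If $a+2\le s\le 2a$, then $\lceil s/2\rceil\le a$, so it is at least $(a+1)-a+1=2$.

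\textbf{Case $s=0$.} Here I take $n=4b$. By parity, representations use $q\in\{2,4\}$ summands; $q=6$ or more would force a negative sum of the $t_i$. The choice $q=4$ gives the representation with all $t_i=0$. The choice $q=2$ requires $t_1+t_2=b=2a+1$ with $t_i\le a+1$, which is solved by $(a+1,a)$. Thus $r_+(4b)\ge2$.

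\textbf{Case $s=1$.} Here I take $n=4b+2$. The choice $q=4$ with $\sum t_i=1$ gives $(1,0,0,0)$. The choice $q=2$ requires $t_1+t_2=b+1=2a+2$, which is solved by $(a+1,a+1)$. Thus $r_+(4b+2)\ge2$.

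These three cases exhaust $0\le s\le 2a$, which proves the lemma. The only step needing care is the bounded pair count in the first case, specifically the range $s>a+1$, where the upper bound $a+1$ on the parts actually binds. The inequality $s\le 2a$ is exactly what keeps two pairs available there. Everything else is the routine parity and sign argument already used in Lemma~\ref{lem:plus-p1-count}.
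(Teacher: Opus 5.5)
Your proof is correct and follows the paper's approach: you write each class as $2s \bmod b$ and exhibit an element of size at most $4b+2$ with two distinct factorizations, and your $s=0$ and $s=1$ cases coincide with the paper's. The only difference is that you use $2b+2s$ on the whole range $2\le s\le 2a$, whereas the paper uses $3b+2s$ for $2\le s\le a+1$ (its two representations are yours with an extra summand $b$); your exact count of $r_+(2b+2s)$ is more than needed, since exhibiting two distinct pairs already suffices.
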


\begin{proof}
Every residue class has a unique form $2s\pmod b$ with
$0\le s\le2a$.

For $s=0$, the two distinct representations
$$
4b=b+b+b+b=(b+2a)+(b+2a+2)
$$
show that $4b\in S_1(A_a^+)$.

For $s=1$,
$$
4b+2=(b+2)+b+b+b=(b+2a+2)+(b+2a+2),
$$
so $4b+2\in S_1(A_a^+)$.

If $2\le s\le a+1$, then
$$
3b+2s=(b+2s)+b+b
$$
and
$$
3b+2s=(b+2s-2)+(b+2)+b
$$
are distinct representations by elements of $A_a^+$.  Also
$$
3b+2s\le3b+2(a+1)=4b+1.
$$

Finally, if $a+2\le s\le2a$, then
$$
2b+2s
=(b+2(s-a-1))+(b+2(a+1))
$$
and
$$
2b+2s
=(b+2(s-a))+(b+2a)
$$
are distinct.  All four indices lie in $[0,a+1]$, and
$$
2b+2s\le2b+4a=4b-2.
$$
The four cases cover every residue class.
\end{proof}

\begin{proof}[Proof of Theorem~\ref{thm:plus} for $p=1$]
Lemma~\ref{lem:plus-p1-count} gives
$$
3b+2\notin S_1(A_a^+),
$$
whereas Lemma~\ref{lem:plus-p1-cover} gives
$$
4b+2\in S_1(A_a^+)
\quad\text{and}\quad
\max\Ap_1(A_a^+;b)\le4b+2.
$$
We show that $4b+2$ is the $1$-Ap\'ery element in the residue class
$2\pmod b$.  If a smaller number $kb+2$, with $0\leq k<4$, belonged to
$S_1(A_a^+)$, then Lemma~\ref{lem:closure} would give
$$
3b+2=(kb+2)+(3-k)b\in S_1(A_a^+),
$$
a contradiction.  Hence
$$
\max\Ap_1(A_a^+;b)=4b+2.
$$
By Lemma~\ref{lem:apery-formula},
$$
g_1(A_a^+)
=(4b+2)-b
=3b+2
=6a+5.
$$
\end{proof}

\subsection{The case \texorpdfstring{$p\ge2$}{p at least 2}}

\begin{lemma}
\label{lem:plus-four}
For $2\le s\le a+1, r_+(4b+2s)=p_{\le4}(s).$
\end{lemma}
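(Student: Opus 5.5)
The plan is to mirror the proof of Lemma~\ref{lem:minus-four}, adjusting only for the larger upper bound $a+1$ on the indices. Write a representation of $4b+2s$ with $q$ summands as
$$
4b+2s=\sum_{i=1}^{q}(b+2t_i),\qquad 0\le t_i\le a+1,
$$
which rearranges to
$$
2\sum_{i=1}^{q}t_i=(4-q)b+2s.
$$
Since $b=2a+1$ is odd, $q$ must be even. The case $q=0$ is excluded because $4b+2s>0$. So the proof will go through $q=2$, $q\ge6$, and $q=4$ in turn.

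\textbf{Case $q=2$.} This is the one step that differs from the $A_a^-$ family and explains the hypothesis $s\ge2$. Here we need $t_1+t_2=s+b=s+2a+1$. The maximum possible value of $t_1+t_2$ is now $2(a+1)=2a+2$, so a solution would force $s\le1$. Since $s\ge2$, this case is impossible. For $s=1$ it would genuinely occur, via $(a+1,a+1)$, which is the extra representation used in Lemma~\ref{lem:plus-p1-cover}.

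\textbf{Case $q\ge6$.} Here
$$
\sum_{i=1}^{q}t_i=s-\frac{q-4}{2}\,b\le s-b=s-2a-1,
$$
and this is negative because $s\le a+1<2a+1$. So this case is also impossible.

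\textbf{Case $q=4$.} Cancelling $4b$ leaves $t_1+t_2+t_3+t_4=s$. Because $s\le a+1$, each $t_i\le s\le a+1$, so the upper bounds on the indices are automatic. Sorting the $t_i$ into nonincreasing order gives a bijection between these representations (as multiplicity vectors) and the partitions of $s$ into at most four parts. Hence $r_+(4b+2s)=p_{\le4}(s)$.

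I do not expect any real obstacle. The only point needing care is checking that the $q=2$ exclusion uses exactly $s\ge2$, and that the upper limit $s\le a+1$ is what makes both the $q\ge6$ exclusion and the automatic index bound work.
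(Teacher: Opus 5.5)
Your proposal is correct and follows the paper's proof essentially verbatim: parity forces $q$ even, $q=2$ fails because $t_1+t_2=s+b\ge 2a+3>2(a+1)$, $q\ge 6$ fails because $s\le a+1<b$, and $q=4$ gives partitions of $s$ into at most four parts with the bounds $t_i\le a+1$ automatic. Your explicit exclusion of $q=0$ and your remark that $s=1$ genuinely admits the extra pair $(a+1,a+1)$ are accurate additions.
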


\begin{proof}
For a representation with $q$ summands,
$$
2\sum_{i=1}^{q}t_i=(4-q)b+2s,
$$
so $q$ is even.  If $q=2$, then
$$
t_1+t_2=s+b\ge2a+3>2(a+1),
$$
which is impossible.  If $q\ge6$, then
$$
\sum_{i=1}^{q}t_i
\le s-b<0
$$
because $s\le a+1<b$.  Thus $q=4$, and
$$
t_1+t_2+t_3+t_4=s.
$$
The upper bounds $t_i\le a+1$ are automatic.  The resulting unordered
quadruples are counted by $p_{\le4}(s)$.
\end{proof}

\begin{lemma}
\label{lem:plus-layers}
One has
$$
r_+(5b)=p_{\le3}(a+2).
$$
For $1\le s\le a+1$,
$$
r_+(5b+2s)
=p_{\le5}(s)+p_{\le3}(a+2-s).
$$
For $a+2\le s\le2a$,
$$
r_+(3b+2s)=\brp{3}{a+1}{s}.
$$
\end{lemma}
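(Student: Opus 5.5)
The plan is to argue exactly as in Lemmas~\ref{lem:minus-four} and~\ref{lem:minus-five}. Write a representation with $q$ summands as $\sum_{i=1}^q(b+2t_i)$ with $0\le t_i\le a+1$. Comparing with the target $cb+2s$ gives
$$
2\sum_{i=1}^{q}t_i=(c-q)b+2s .
$$
Since $b$ is odd, $q\equiv c\pmod 2$. The work is then to rule out all but the relevant values of $q$ using $0\le\sum t_i\le q(a+1)$, and to count the surviving ones as bounded partitions.

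\emph{The targets $5b$ and $5b+2s$.} For $5b+2s$ with $0\le s\le a+1$ (this also covers $5b$), $q$ is odd.
\begin{itemize}[leftmargin=*]
\item If $q=1$, then $t_1=2b+s>a+1$, which is impossible.
\item If $q\ge7$, then $\sum t_i=s-\tfrac{q-5}{2}b\le s-b<0$, because $s\le a+1<b$.
\item If $q=5$, then $t_1+\dots+t_5=s$. The bound $t_i\le a+1$ is automatic since $s\le a+1$, so this case contributes $p_{\le5}(s)$.
\item If $q=3$, then $t_1+t_2+t_3=s+b=s+2a+1$. Substitute $u_i=a+1-t_i\ge0$, which gives $u_1+u_2+u_3=a+2-s$. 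The caps $u_i\le a+1$ are automatic unless some $u_i=a+2$, which can only happen when $s=0$. Otherwise the count is $p_{\le3}(a+2-s)$.
\end{itemize}
For $s\ge1$ this yields $p_{\le5}(s)+p_{\le3}(a+2-s)$.

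\emph{The main obstacle is $s=0$.} Here the $q=5$ term is $p_{\le5}(0)=1$, namely $5b=b+b+b+b+b$. In the $q=3$ term the constraint $t_i\ge0$, that is $u_i\le a+1$, excludes exactly one triple, $(a+2,0,0)$. That triple would correspond to $t=(-1,a+1,a+1)$, which is not allowed. So the $q=3$ count is $p_{\le3}(a+2)-1$, and the total is
$$
r_+(5b)=1+\bigl(p_{\le3}(a+2)-1\bigr)=p_{\le3}(a+2).
$$
This bookkeeping (one extra five-summand representation against one excluded boundary triple) is the step I would check most carefully.

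\emph{The target $3b+2s$ with $a+2\le s\le2a$.} Now $q$ is odd.
\begin{itemize}[leftmargin=*]
\item If $q=1$, then $t_1=b+s>a+1$, which is impossible.
\item If $q\ge5$, then $\sum t_i\le s-b\le 2a-(2a+1)<0$.
\item So $q=3$ with $t_1+t_2+t_3=s$ and $0\le t_i\le a+1$.
\end{itemize}
The upper cap is now genuinely active. Sorting the $t_i$ in nonincreasing order puts these representations in bijection with the triples counted by $\brp{3}{a+1}{s}$ in Definition~\ref{def:bounded-partition}.
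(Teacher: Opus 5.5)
Your proposal is correct and takes essentially the same route as the paper. Parity and size arguments leave only $q\in\{3,5\}$ for $5b+2s$ and only $q=3$ for $3b+2s$. The complement $u_i=a+1-t_i$ turns the three-summand case into $p_{\le3}(a+2-s)$, and at $s=0$ the single five-summand representation exactly offsets the excluded triple $(a+2,0,0)$. The only difference is that you write out the exclusions of $q=1$ and $q\ge7$ (using $s\le a+1<b$), which the paper leaves to the analogy with Lemma~\ref{lem:minus-five}.
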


\begin{proof}
Consider a representation of $5b+2s$.  As in the proof of
Lemma~\ref{lem:minus-five}, only $q=5$ and $q=3$ can occur.
For $q=5$,
$$
t_1+\cdots+t_5=s.
$$
For $q=3$,
$$
t_1+t_2+t_3=s+b.
$$
Putting $u_i=a+1-t_i$ gives
\begin{equation}
u_1+u_2+u_3=a+2-s. \label{eq:plus-complement}
\end{equation}
If $1\le s\le a+1$, the right-hand side lies in $[1,a+1]$, so its
parts automatically satisfy $u_i\le a+1$.  The $q=5$ and $q=3$
contributions are therefore $p_{\le5}(s)$ and
$p_{\le3}(a+2-s)$, respectively.

When $s=0$, the five-term contribution is the single partition of
$0$.  In the three-term contribution, \eqref{eq:plus-complement}
counts all partitions of $a+2$ into at most three parts except the
one-part partition $(a+2)$, which violates $u_1\le a+1$.  Therefore
$$
r_+(5b)
=1+\bigl(p_{\le3}(a+2)-1\bigr)
=p_{\le3}(a+2).
$$

Finally, for $a+2\le s\le2a$, every representation of $3b+2s$
has exactly three summands: the cases $q=1$ and $q\ge5$ are excluded
exactly as in Lemma~\ref{lem:minus-five}.  Hence the representations
are in bijection with the triples
$$
a+1\ge t_1\ge t_2\ge t_3\ge0,
\qquad t_1+t_2+t_3=s.
$$
Their number is $\brp{3}{a+1}{s}$.
\end{proof}

\begin{lemma}
\label{lem:plus-lower}
Let $a\ge3$.  Then
$$
r_+(5b+2s)>
\left\lfloor\frac{a+1}{2}\right\rfloor
\qquad(0\le s\le a+1)
$$
and
$$
\brp{3}{a+1}{s}>
\left\lfloor\frac{a+1}{2}\right\rfloor
\qquad(a+2\le s\le2a).
$$
\end{lemma}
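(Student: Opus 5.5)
Let $c=\lfloor (a+1)/2\rfloor$. The plan mirrors Lemma~\ref{lem:minus-lower}. We rewrite everything via Lemma~\ref{lem:plus-layers} and then bound each partition count from below by $p_{\le2}$, which is explicit. The base inequality we will use repeatedly is
$$
p_{\le3}(n)\ge p_{\le2}(n)=\lfloor n/2\rfloor+1,
$$
with strict inequality as soon as $n\ge3$, because then $(n-2)+1+1$ is a genuine three-part partition. The same strictness holds for $p_{\le5}(n)$ when $n\ge3$.

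\textbf{First assertion, $s=0$.} Here $r_+(5b)=p_{\le3}(a+2)$. Since $a+2\ge5$, we get
$$
p_{\le3}(a+2)>\lfloor (a+2)/2\rfloor+1\ge c+1>c.
$$

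\textbf{First assertion, $1\le s\le a+1$.} Here $r_+(5b+2s)=p_{\le5}(s)+p_{\le3}(a+2-s)$, and both arguments are at least $1$. The crude bound gives
$$
\lfloor s/2\rfloor+\lfloor (a+2-s)/2\rfloor+2.
$$
The two floors sum to at least $(a+2)/2-1=a/2$, so the total is at least $a/2+2>c$. No parity case analysis is needed: the extra $+1$ relative to the minus family, coming from $a+2$ instead of $a-1$, gives the slack directly.

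\textbf{Second assertion, $a+2\le s\le 2a$.} This range lies inside $[L,2L]$ with $L=a+1$, since $a+2\le s\le 2a\le 2a+2$. Corollary~\ref{cor:three-part-bound} then gives
$$
\brp{3}{a+1}{s}\ge p_{\le3}(a+1).
$$
Because $a\ge3$, we have $a+1\ge4$, so
$$
p_{\le3}(a+1)>\lfloor (a+1)/2\rfloor+1>c.
$$
The range is empty when $a<2$, which $a\ge3$ excludes anyway.

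\textbf{Main obstacle.} The only real care point is verifying the floor-sum estimate in the middle case uniformly, including the endpoints $s=1$ and $s=a+1$. Checking $s=a+1$: the sum is $p_{\le5}(a+1)+p_{\le3}(1)=p_{\le5}(a+1)+1$. Since $a+1\ge4$, strictness gives $p_{\le5}(a+1)>\lfloor (a+1)/2\rfloor+1$, which is more than enough. The endpoint $s=1$ is covered similarly by the general floor bound.
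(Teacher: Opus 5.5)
Your proof is correct and follows the paper's argument essentially step for step: you rewrite each count via Lemma~\ref{lem:plus-layers}, bound each partition count from below by $p_{\le2}$, and apply Corollary~\ref{cor:three-part-bound} with $L=a+1$ for the last range. The strictness refinements $p_{\le3}(n)>p_{\le2}(n)$ for $n\ge3$ are not needed, since the plain $p_{\le2}$ bounds already exceed $\lfloor (a+1)/2\rfloor$; your explicit floor-sum estimate (total $\ge a/2+2$) usefully justifies the strict inequality that the paper asserts without comment in the middle case.
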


\begin{proof}
For $s=0$, Lemma~\ref{lem:plus-layers} gives
$$
r_+(5b)=p_{\le3}(a+2)
\ge p_{\le2}(a+2)
=\left\lfloor\frac{a+2}{2}\right\rfloor+1
>
\left\lfloor\frac{a+1}{2}\right\rfloor.
$$
For $1\le s\le a+1$, the same lemma and the two-part lower bounds give
\begin{align*}
r_+(5b+2s)
&=p_{\le5}(s)+p_{\le3}(a+2-s)\\
&\ge
\left\lfloor\frac s2\right\rfloor+1
+
\left\lfloor\frac{a+2-s}{2}\right\rfloor+1\\
&>
\left\lfloor\frac{a+1}{2}\right\rfloor.
\end{align*}

For the final assertion, put $L=a+1$.  The interval
$[a+2,2a]$ is contained in $[L,2L]$.  Hence
Corollary~\ref{cor:three-part-bound} gives
$$
\brp{3}{a+1}{s}
\ge p_{\le3}(a+1)
\ge p_{\le2}(a+1)
=\left\lfloor\frac{a+1}{2}\right\rfloor+1.
$$
\end{proof}

\begin{proof}[Proof of Theorem~\ref{thm:plus} for $p\ge2$]
Let $j\ge2$ satisfy
$$
M_j\le p<M_{j+1}.
$$
Because
$$
M_j\le p\le\left\lfloor\frac{a+1}{2}\right\rfloor,
$$
Lemma~\ref{lem:index-bound} gives $j\le a$.  In particular,
Lemma~\ref{lem:plus-four} applies with $s=j$, and
$$
r_+(4b+2j)=p_{\le4}(j)=M_j\le p.
$$
Thus
\begin{equation}
4b+2j\notin S_p(A_a^+). \label{eq:plus-notin}
\end{equation}

We now bound every $p$-Ap\'ery element explicitly.  Since $b$ is
odd, the classes
$$
2s\pmod b,\qquad 0\le s\le2a,
$$
form a complete residue system modulo $b$.  We divide this complete
range into three cases.

\medskip
\noindent\textbf{Case 1: $0\le s\le j$.}
Lemma~\ref{lem:plus-lower} gives
$$
r_+(5b+2s)>
\left\lfloor\frac{a+1}{2}\right\rfloor
\ge p.
$$
Therefore
$$
5b+2s\in S_p(A_a^+),
\qquad
5b+2s\le5b+2j.
$$

\medskip
\noindent\textbf{Case 2: $j+1\le s\le a+1$.}
Since $j\ge2$, Lemma~\ref{lem:plus-four} applies and gives
$$
r_+(4b+2s)
=p_{\le4}(s)
\ge p_{\le4}(j+1)
=M_{j+1}>p.
$$
Hence $4b+2s\in S_p(A_a^+)$.  Furthermore,
$$
4b+2s
\le4b+2(a+1)
=10a+6
<10a+5+2j
=5b+2j.
$$

\medskip
\noindent\textbf{Case 3: $a+2\le s\le2a$.}
By Lemmas~\ref{lem:plus-layers} and~\ref{lem:plus-lower},
$$
r_+(3b+2s)
=\brp{3}{a+1}{s}
>
\left\lfloor\frac{a+1}{2}\right\rfloor
\ge p.
$$
Thus $3b+2s\in S_p(A_a^+)$, and
$$
3b+2s
\le3b+4a
=10a+3
<5b+2j.
$$

The three cases cover every residue class modulo $b$, and in each
class we have exhibited a member of $S_p(A_a^+)$ not exceeding
$5b+2j$.  Therefore
\begin{equation}
\max\Ap_p(A_a^+;b)\le5b+2j. \label{eq:plus-upper}
\end{equation}
Case~1 with $s=j$ shows that $5b+2j\in S_p(A_a^+)$.

It remains to prove that this is the $p$-Ap\'ery element in the class
$2j\pmod b$.  Since $j\le a$, one has $0\le2j<b$.  If a smaller
member $kb+2j$, with $0\leq k<5$, belonged to $S_p(A_a^+)$, then
Lemma~\ref{lem:closure} would imply
$$
4b+2j=(kb+2j)+(4-k)b\in S_p(A_a^+),
$$
contrary to \eqref{eq:plus-notin}.  Consequently,
$$
m_{2j}^{(p)}=5b+2j.
$$
Combining this equality with \eqref{eq:plus-upper}, we obtain
$$
\max\Ap_p(A_a^+;b)=5b+2j.
$$
Finally,
$$
g_p(A_a^+)
=(5b+2j)-b
=4b+2j
=8a+2j+4
$$
by Lemma~\ref{lem:apery-formula}.
\end{proof}
\section*{Acknowledgements}
This research was supported by the ANCHOR program Glocal through the Jeju ANCHOR center, funded by the Ministry of Education(MOE) and the Jeju Special Self-Governing Province, Republic of Korea.(2026-ANCHOR-17-001)

\end{document}